\g@addto@macro\th@plain{\thm@headpunct{}}
\newtheorem{theorem}{Theorem}[section]
\newtheorem{lemma}[theorem]{Lemma}
\newtheorem{remark}[theorem]{Remark}
\newcommand{\xx}{ {\textbf x} }
\newcommand{\ab}{ {\textbf a} }
\newcommand{\bb}{ {\textbf b} }
\newcommand{\cb}{ {\textbf c} }
\newcommand{\yy}{ {\textbf y} }
\newcommand{\zz}{ {\textbf z} }
\newcommand{\ee}{ {\textbf e} }
\newcommand{\ub}{ {\textbf u} }
\newcommand{\vb}{ {\textbf v} }
\newcommand{\VV}{ \Omega }
\newcommand{\RR}{\mathbb{R}}
\newcommand{\KK}{\mathbb{K}}
\newcommand{\LL}{\mathbb{L}}
\newcommand{\PP}{\mathbb{P}}
\newcommand{\En}{\mathbb{E}}
\newcommand{\DD}{\mathcal{D}}
\newcommand{\TT}{\mathcal{T}}
\newcommand{\tr}{\mathrm{tr}\,}
\newcommand{\Trace}{\mathrm{Trace}\,}
\newcommand{\DDet}{\mathrm{Det}}
\providecommand{\norm}[1]{\lVert#1\rVert}
\providecommand{\scalar}[1]{\left\langle#1\right\rangle}
\newcommand{\gw}{ g }
\newcommand{\ww}{ w }
\title[Lukacs-Olkin-Rubin theorem without invariance of the ``quotient'']{The Lukacs-Olkin-Rubin theorem on symmetric cones without invariance of the ``quotient''}
\author[B. Ko\l{}odziejek]{Bartosz Ko\l{}odziejek}
\address{Faculty of Mathematics and Information Science\\Warsaw University of Technology\\Pl. Politechniki 1\\00-661 Warszawa, Poland}
\email{kolodziejekb@mini.pw.edu.pl}
\begin{document}

\begin{abstract}
We prove the Lukacs-Olkin-Rubin theorem without invariance of the distribution of the ``quotient'', which was the key assumption in the original proof of [Olkin--Rubin, Ann. Math. Stat. \textbf{33} (1962), 1272--1280]. Instead we assume existence of strictly positive continuous densities of respective random variables. We consider the (cone variate) ``quotient'' for any division algorithm satisfying some natural conditions. For that purpose, the new proof of the Olkin--Baker functional equation on symmetric cones is given.
\end{abstract}
\maketitle

\section{Introduction}
The \cite{Lukacs1955} theorem is one of the most celebrated characterizations of probability distributions. It states that \textit{if $X$ and $Y$ are independent, positive, non-degenerate random variables such that their sum and quotient are also independent then $X$ and $Y$ have gamma distributions with the same scale parameter}.

This theorem has many generalizations. The most important in the multivariate setting were given by \cite{OlRu1962} and \cite{CaLe1996}, where the authors extended characterization to matrix and symmetric cones variate distributions, respectively. There is no unique way of defining the quotient of elements of the cone of positive definite symmetric matrices $\VV_+$ and in these papers the authors have considered very general form $U=g(X+Y)\cdot X\cdot g^T(X+Y)$, where $g$ is the so called division algorithm, that is, $g(\ab)\cdot\ab\cdot g^T(\ab)=I$ for any $\ab\in\VV_+$, where $I$ is the identity matrix and $g(\ab)$ is invertible for any $\ab\in\VV_+$ (later on, abusing notation we will write $g(\xx)\yy=g(\xx)\cdot\yy\cdot g^{T}(\xx)$, that is, in this case $g(\xx)$ denotes the linear operator acting on $\VV_+$). The drawback of their extension was the additional strong assumption of invariance of the distribution of $U$ under a group of automorphisms. This result was generalized to homogeneous cones in \cite{BoHaMa2011}.

There were successful attempts in replacing the invariance of the ``quotient'' assumption with the existence of regular densities of random variables $X$ and $Y$. \cite{BW2002} assuming existence of strictly positive, twice differentiable densities  proved a characterization of Wishart distribution on the cone $\VV_+$ for division algorithm $g_1(\ab)=\ab^{-1/2}$, where $\ab^{1/2}$ denotes the unique positive definite symmetric root of $\ab\in\VV_+$. This results was generalized to all non-octonion symmetric cones of rank greater than $2$ and to the Lorentz cone for strictly positive and continuous densities by \cite{Kolodzeng,Kolo2013}.

Exploiting the same approach, with the same technical assumptions on densities as in \cite{BW2002} it was proven by \cite{HaLaZi2008} that the independence of $X+Y$ and the quotient defined through the Cholesky decomposition, i.e. $g_2(\ab)=T_\ab^{-1}$, where $T_\ab$ is a lower triangular matrix such that $\ab=T_\ab\cdot T_\ab^T\in\VV_+$, characterizes a wider family of distributions called Riesz (or sometimes called Riesz-Wishart). This fact shows that the invariance property assumed in \cite{OlRu1962} and \cite{CaLe1996} is not of technical nature only. Analogous results for homogeneous cones were obtained by \cite{Bo2005,Bout2009}.

In this paper we deal with the density version of Lukacs-Olkin-Rubin theorem on symmetric cones for division algorithm satisfying some natural properties. We assume that the densities of $X$ and $Y$ are strictly positive and continuous. We consider quotient $U$ for an arbitrary, fixed division algorithm $g$ as in the original paper of \cite{OlRu1962}, additionally satisfying some natural conditions. In the known cases ($g=g_1$ and $g=g_2$) this improves the results obtained in \cite{BW2002,HaLaZi2008,Kolo2013}. In general case, the densities of $X$ and $Y$ are given in terms of, so called, $w$-multiplicative Cauchy functions, that is functions satisfying 
\begin{align*}
f(\xx)f\left(w(I)\yy\right)=f\left(w(\xx)\yy\right),\quad(\xx,\yy)\in\VV_+^2,
\end{align*}
where $w(\xx)\yy=w(\xx)\cdot\yy\cdot w^T(\xx)$ (i.e. $g(\xx)=w(\xx)^{-1}$ is a division algorithm). Consistently, we will call $w$ a multiplication algorithm. Such functions were recently considered in \cite{wC2013}. 

Unfortunately we can't answer the question whether there exists division (or equivalently multiplication) algorithm resulting in characterizing other distribution than Riesz or Wishart. Moreover, the simultaneous removal of the assumptions of the invariance of the ``quotient'' and the existence of densities remains a challenge.

This paper is organized as follows. We start in the next section with basic definitions and theorems regarding analysis on symmetric cones. The statement and proof of the main result are given in Section \ref{secLUK}. Section \ref{funeq} is devoted to consideration of $w$-logarithmic Cauchy functions and the Olkin--Baker functional equation. In that section we offer much shorter, simpler and covering more general cones proof of the Olkin--Baker functional equation than given in \cite{BW2002,HaLaZi2008,Kolo2013}. 

\section{Preliminaries}
In this section we give a short introduction to the theory of symmetric cones. For further details we refer to \citet{FaKo1994}. 

A \textit{Euclidean Jordan algebra} is a Euclidean space $\En$ (endowed with scalar product denoted $\scalar{\xx,\yy}$) equipped with a bilinear mapping (product)
\begin{align*}
\En\times\En \ni \left(\xx,\yy\right)\mapsto \xx\yy\in\En
\end{align*}
and a neutral element $\ee$ in $\En$ such that for all $\xx$, $\yy$, $\zz$ in $\En$:
\begin{enumerate}[(i)]
	\item $\xx\yy=\yy\xx$, 
	\item $\xx(\xx^2\yy)=\xx^2(\xx\yy)$,
	\item $\xx\ee=\xx$,
	\item $\scalar{\xx,\yy\zz}=\scalar{\xx\yy,\zz}$.
\end{enumerate}
For $\xx\in\En$ let $\LL(\xx)\colon \En\to\En$ be linear map defined by
\begin{align*}
\LL(\xx)\yy=\xx\yy,
\end{align*}
and define 
\begin{align*}
\PP(\xx)=2\LL^2(\xx)-\LL\left(\xx^2\right).
\end{align*} 
The map $\PP\colon \En\mapsto End(\En)$ is called the \emph{quadratic representation} of $\En$.

An element $\xx$ is said to be \emph{invertible} if there exists an element $\yy$ in $\En$ such that $\LL(\xx)\yy=\ee$. Then $\yy$ is called the \emph{inverse of} $\xx$ and is denoted by $\yy=\xx^{-1}$. Note that the inverse of $\xx$ is unique. It can be shown that $\xx$ is invertible if and only if $\PP(\xx)$ is invertible and in this case $\left(\PP(\xx)\right)^{-1} =\PP\left(\xx^{-1}\right)$.

Euclidean Jordan algebra $\En$ is said to be \emph{simple} if it is not a \mbox{Cartesian} product of two Euclidean Jordan algebras of positive dimensions. Up to linear isomorphism there are only five kinds of Euclidean simple Jordan algebras. Let $\mathbb{K}$ denote either the real numbers $\RR$, the complex ones $\mathbb{C}$, quaternions $\mathbb{H}$ or the octonions $\mathbb{O}$, and write $S_r(\mathbb{K})$ for the space of $r\times r$ Hermitian matrices with entries valued in $\mathbb{K}$, endowed with the Euclidean structure $\scalar{\xx,\yy}=\Trace(\xx\cdot\bar{\yy})$ and with the Jordan product
\begin{align}\label{defL}
\xx\yy=\tfrac{1}{2}(\xx\cdot\yy+\yy\cdot\xx),
\end{align}
where $\xx\cdot\yy$ denotes the ordinary product of matrices and $\bar{\yy}$ is the conjugate of $\yy$. Then $S_r(\RR)$, $r\geq 1$, $S_r(\mathbb{C})$, $r\geq 2$, $S_r(\mathbb{H})$, $r\geq 2$, and the exceptional $S_3(\mathbb{O})$ are the first four kinds of Euclidean simple Jordan algebras. Note that in this case 
\begin{align}\label{defP}
\PP(\yy)\xx=\yy\cdot\xx\cdot\yy.
\end{align}
The fifth kind is the Euclidean space $\RR^{n+1}$, $n\geq 2$, with Jordan product
\begin{align}\label{scL}\begin{split}
\left(x_0,x_1,\dots, x_n\right)\left(y_0,y_1,\dots,y_n\right) =\left(\sum_{i=0}^n x_i y_i,x_0y_1+y_0x_1,\dots,x_0y_n+y_0x_n\right).
\end{split}
\end{align}

To each Euclidean simple Jordan algebra one can attach the set of Jordan squares
\begin{align*}
\bar{\VV}=\left\{\xx^2\colon\xx\in\En \right\}.
\end{align*}
The interior $\VV$ is a symmetric cone.
Moreover $\VV$ is \emph{irreducible}, i.e. it is not the Cartesian product of two convex cones. One can prove that an open convex cone is symmetric and irreducible if and only if it is the cone $\VV$ of some Euclidean simple Jordan algebra. Each simple Jordan algebra corresponds to a symmetric cone, hence there exist up to linear isomorphism also only five kinds of symmetric cones. The cone corresponding to the Euclidean Jordan algebra $\RR^{n+1}$ equipped with Jordan product \eqref{scL} is called the Lorentz cone. 

We denote by $G(\En)$ the subgroup of the linear group $GL(\En)$ of linear automorphisms which preserves $\VV$, and we denote by $G$ the connected component of $G(\En)$ containing the identity.  Recall that if $\En=S_r(\RR)$ and $GL(r,\RR)$ is the group of invertible $r\times r$ matrices, elements of $G(\En)$ are the maps $g\colon\En\to\En$ such that there exists $\ab\in GL(r,\RR)$ with
\begin{align*}
g(\xx)=\ab\cdot\xx\cdot\ab^T.
\end{align*}
We define $K=G\cap O(\En)$, where $O(\En)$ is the orthogonal group of $\En$. It can be shown that 
\begin{align*}
K=\{ k\in G\colon k\ee=\ee \}.
\end{align*} 

A \emph{multiplication algorithm} is a map $\VV\to G\colon \xx\mapsto w(\xx)$ such that $w(\xx)\ee=\xx$ for all $\xx\in\VV$. This concept is consistent with, so called, division algorithm $g$, which was introduced by \citet{OlRu1962} and \citet{CaLe1996}, that is a mapping $\VV\ni\xx\mapsto g(\xx)\in G$ such that $g(\xx)\xx=\ee$ for any $\xx\in\VV$. If $w$ is a multiplication algorithm then $g=w^{-1}$ (that is, $g(\xx)w(\xx)=w(\xx)g(\xx)=Id_\VV$ for any $\xx\in\VV$) is a division algorithm and vice versa, if $g$ is a division algorithm then $w=g^{-1}$ is a multplication algorithm. One of two important examples of multiplication algorithms is the map $w_1(\xx)=\PP\left(\xx^{1/2}\right)$.

We will now introduce a very useful decomposition in $\En$, called \emph{spectral decomposition}. An element $\cb\in\En$ is said to be a \emph{idempotent} if $\cb\cb=\cb\neq 0$. Idempotents $\ab$ and $\bb$ are \emph{orthogonal} if $\ab\bb=0$. Idempotent $\cb$ is \emph{primitive} if $\cb$ is not a sum of two non-null idempotents. A \emph{complete system of primitive orthogonal idempotents} is a set $\left(\cb_1,\dots,\cb_r\right)$ such that
\begin{align*}
\sum_{i=1}^r \cb_i=\ee\quad\mbox{and}\quad\cb_i\cb_j=\delta_{ij}\cb_i\quad\mbox{for } 1\leq i\leq j\leq r.
\end{align*}
The size $r$ of such system is a constant called the \emph{rank} of $\En$. Any element $\xx$ of a Euclidean simple Jordan algebra can be written as $\xx=\sum_{i=1}^r\lambda_i\cb_i$ for some complete system of primitive orthogonal idempotents $\left(\cb_1,\dots,\cb_r\right)$. The real numbers $\lambda_i$, $i=1,\dots,r$ are the \emph{eigenvalues} of $\xx$. One can then define \emph{trace} and \emph{determinant} of $\xx$ by, respectively, $\tr\xx=\sum_{i=1}^r\lambda_i$ and $\det\xx=\prod_{i=1}^r\lambda_i$. An element $\xx\in\En$ belongs to $\VV$ if and only if all its eigenvalues are strictly positive. 

The rank $r$ and $\dim\VV$ of irreducible symmetric cone are connected through relation
\begin{align*}
\dim\VV=r+\frac{d r(r-1)}{2},
\end{align*}
where $d$ is an integer called the \emph{Peirce constant}. 

If $\cb$ is a primitive idempotent of $\En$, the only possible eigenvalues of $\LL(\cb)$ are $0$, $\tfrac{1}{2}$ and $1$. We denote by $\En(\cb,0)$, $\En(\cb,\tfrac{1}{2})$ and $\En(\cb,1)$ the corresponding eigenspaces. The decomposition 
\begin{align*}
\En=\En(\cb,0)\oplus\En(\cb,\tfrac{1}{2})\oplus\En(\cb,1)
\end{align*}
is called the \emph{Peirce decomposition of $\En$ with respect to $\cb$}. Note that $\PP(\cb)$ is the orthogonal projection of $\En$ onto $\En(\cb,1)$.

Fix a complete system of orthogonal idempotents $\left(\cb_i\right)_{i=1}^r$. Then for any $i,j\in\left\{1,2,\dots,r\right\}$ we write
\begin{align*}
\begin{split}
\En_{ii} & =\En(\cb_i,1)=\RR \cb_i, \\
\En_{ij} & = \En\left(\cb_i,\frac{1}{2}\right) \cap \En\left(\cb_j,\frac{1}{2}\right) \mbox{ if }i\neq j.
\end{split}
\end{align*}
It can be proved (see \cite[Theorem IV.2.1]{FaKo1994}) that
\begin{align*}
\En=\bigoplus_{i\leq j}\En_{ij}
\end{align*}
and
\begin{align*}
\begin{split}
\En_{ij}\cdot\En_{ij} & \subset\En_{ii}+\En_{ij}, \\
\En_{ij}\cdot\En_{jk} & \subset\En_{ik},\mbox{ if }i\neq k, \\
\En_{ij}\cdot\En_{kl} & =\{0\},\mbox{ if }\{i,j\}\cap\{k,l\}=\emptyset.
\end{split}\end{align*}
Moreover (\cite[Lemma IV.2.2]{FaKo1994}), if $\xx\in\En_{ij}$, $\yy\in\En_{jk}$, $i\neq k$, then 
\begin{align}\label{xxyy}
\xx^2 & =\tfrac{1}{2}\norm{\xx}^2(\cb_i+\cb_j),\\
\notag\norm{\xx\yy}^2 & =\tfrac{1}{8}\norm{\xx}^2\norm{\yy}^2.
\end{align}
The dimension of $\En_{ij}$ is the Peirce constant $d$ for any $i\neq j$. When $\En$ is $S_r(\KK)$, if $(e_1,\dots,e_r)$ is an orthonormal basis of $\RR^r$, then $\En_{ii}=\RR e_i e_i^T$ and $\En_{ij}=\KK\left(e_i e_j^T+e_je_i^T\right)$ for $i<j$ and $d$ is equal to $dim_{|\RR}\KK$.
	
For $1\leq k \leq r$ let $P_k$ be the orthogonal projection onto $\En^{(k)}=\En(\cb_1+\ldots+\cb_k,1)$, $\det^{(k)}$ the determinant in the subalgebra $\En^{(k)}$, and, for $\xx\in\VV$, $\Delta_k(\xx)=\det^{(k)}(P_k(\xx))$. Then $\Delta_k$ is called the principal minor of order $k$ with respect to the Jordan frame $(\cb_k)_{k=1}^r$. Note that $\Delta_r(\xx)=\det\xx$. For $s=(s_1,\ldots,s_r)\in\RR^r$ and $\xx\in\VV$, we write
\begin{align*}
\Delta_s(\xx)=\Delta_1(\xx)^{s_1-s_2}\Delta_2(\xx)^{s_2-s_3}\ldots\Delta_r(\xx)^{s_r}.
\end{align*}
$\Delta_s$ is called a generalized power function. If $\xx=\sum_{i=1}^r\alpha_i\cb_i$, then $\Delta_s(\xx)=\alpha_1^{s_1}\alpha_2^{s_2}\ldots\alpha_r^{s_r}$.

We will now introduce some basic facts about triangular group. For $\xx$ and $\yy$ in $\VV$, let $\xx\Box\yy$ denote the endomorphism of $\En$ defined by
\begin{align*}
\xx\Box\yy=\LL(\xx\yy)+\LL(\xx)\LL(\yy)-\LL(\yy)\LL(\xx).
\end{align*}
If $\cb$ is an idempotent and $\zz\in\En(\cb,\frac{1}{2})$ we define the \emph{Frobenius transformation $\tau_\cb(\zz)$ in $G$} by
\begin{align*}
\tau_\cb(\zz)=\exp(2\zz\Box\cb).
\end{align*}
Since $2\zz\Box\cb$ is nilpotent of degree $3$ (see \cite[Lemma VI.3.1]{FaKo1994}) we get
\begin{align}\label{taucb}
\tau_{\cb}(\zz)=I+(2\zz\Box\cb)+\frac{1}{2}(2\zz\Box\cb)^2.
\end{align}
Given a Jordan frame $(\cb_i)_{i=1}^r$, the subgroup of $G$, 
\begin{align*}
\TT=\left\{\tau_{\cb_1}(\zz^{(1)})\ldots\tau_{\cb_{r-1}}(\zz^{(r-1)})\PP\left(\sum_{i=1}^r \alpha_i\cb_i\right)\colon \alpha_i>0, \zz^{(j)}\in \bigoplus_{k=j+1}^r\En_{jk}\right\}
\end{align*}
is called the \emph{triangular group corresponding to the Jordan frame $(\cb_i)_{i=1}^r$}. For any $\xx$ in $\VV$ there exists a unique $t_{\xx}$ in $\TT$ such that $\xx=t_{\xx}\ee$, that is, there exist (see \cite[Theorem IV.3.5]{FaKo1994}) elements $\zz^{(j)}\in \bigoplus_{k=j+1}^r \En_{jk}$, $1\leq j\leq r-1$ and positive numbers $\alpha_1, \ldots ,\alpha_r$ such that
\begin{align}\label{triangular}
\xx=\tau_{\cb_1}(\zz^{(1)})\tau_{\cb_2}(\zz^{(2)})\ldots\tau_{\cb_{r-1}}(\zz^{(r-1)})\left(\sum_{k=1}^r \alpha_k \cb_k \right).
\end{align}
Mapping $w_2\colon\VV\to\TT, \xx\mapsto w_2(\xx)=t_{\xx}$ realizes a multiplication algorithm.

For $\En=S_r(\RR)$ we have $\VV=\VV_+$. Let us define for $1\leq i,j\leq r$ matrix $\mu_{ij}=\left(\gamma_{kl}\right)_{1\leq k,l\leq r}$ such that $\gamma_{ij}=1$ and all other entries are equal $0$.
Then for Jordan frame $\left(\cb_i\right)_{i=1}^r$, where $\cb_k=\mu_{kk}$, $k=1,\ldots,r$, we have $\zz_{jk}=(\mu_{jk}+\mu_{kj})\in\En_{jk}$ oraz $\norm{\zz_{jk}}^2=2$, $1\leq j,k\leq r$, $j\neq k$.
if $\zz^{(i)}\in\bigoplus_{j=i+1}^r \En_{ij}$, $i=1,\ldots,r-1$, then there exists $\alpha^{(i)}=(\alpha_{i+1},\ldots,\alpha_r)\in\RR^{r-i}$ such that $\zz^{(i)}=\sum_{j=i+1}^r \alpha_j\zz_{ij}$. Then the Frobenius transformation reads
$$\tau_{\cb_i}(\zz^{(i)})\xx=\mathcal{F}_i(\alpha^{(i)})\cdot\xx\cdot \mathcal{F}_i(\alpha^{(i)})^T,$$
where $\mathcal{F}_i(\alpha^{(i)})$ is so called Frobenius matrix:
\begin{align*}
\mathcal{F}_i(\alpha^{(i)})=I+\sum_{j=i+1}^r \alpha_j \mu_{ji},
\end{align*}
i.e. bellow $i$th one of identity matrix there is a vector $\alpha^{(i)}$, particularly
\begin{align*}
\mathcal{F}_2(\alpha^{(2)})=\begin{pmatrix}
  1    &   0    &   0    & \cdots & 0 \\
  0    &   1    &   0    & \cdots & 0 \\
  0    & \alpha_{3} &   1    & \cdots & 0 \\
\vdots & \vdots & \vdots & \ddots & \vdots \\
  0    & \alpha_{r} &   0    & \cdots & 1
\end{pmatrix}.
\end{align*}

It can be shown (\cite[Proposition VI.3.10]{FaKo1994}) that for each $t\in\TT$, $\xx\in\VV$ and $s\in\RR^r$,
\begin{align}\label{deltast}
\Delta_s(t\xx)=\Delta_s(t\ee)\Delta_s(\xx)
\end{align}
and for any $\zz\in\En(\cb_i,\frac{1}{2})$, $i=1,\ldots,r$,
\begin{align}\label{deltastau}
\Delta_s(\tau_{\cb_i}(\zz)\ee)=1,
\end{align}
if only $\Delta_s$ and $\TT$ are associated with the same Jordan frame $\left(\cb_i\right)_{i=1}^r$.

We will now introduce some necessary basics regarding certain probability distribution on symmetric cones.
Absolutely continuous Riesz distribution $R_{s,\ab}$ on $\VV$ is defined for any $\ab\in\VV$ and $s=(s_1,\ldots,s_r)\in\RR^r$ such that $s_i>(i-1)d/2$, $i=1,\ldots,r$, though its density
\begin{align*}
R_{s,\ab}(d\xx)=\frac{\Delta_s(\ab)}{\Gamma_\VV(s)} \Delta_{s-\dim\VV/r}(\xx)e^{-\scalar{\ab,\xx}}I_\VV(\xx)\,d\xx,\quad\xx\in\VV,
\end{align*}
where $\Delta_s$ is the generalized power function with respect to a Jordan frame $(\cb_i)_{i=1}^r$ and $\Gamma_\VV$ is the Gamma function of the symmetric cone $\VV$. It can be shown that $\Gamma_\VV(s)=(2\pi)^{(\dim\VV-r)/2}\prod_{j=1}^r \Gamma(s_j-(j-1)\tfrac{d}{2})$ (see \cite[VII.1.1.]{FaKo1994}). Riesz distribution was introduced in \citet{HaLa01}.
 
Absolutely continuous Wishart distribution $\gamma_{p,\ab}$ on $\VV$ is a special case of Riesz distribution for $s_1=\ldots=s_r=p$. If $\ab\in\VV$ and $p>\dim\VV/r-1$ it has density
\begin{align*}
\gamma_{p,\ab}(d\xx)=\frac{(\det\ab)^p}{\Gamma_\VV(p)} (\det\xx)^{p-\dim\VV/r}e^{-\scalar{\ab,\xx}}I_\VV(\xx)\,d\xx,\quad\xx\in\VV,
\end{align*}
where $\Gamma_\VV(p):=\Gamma_\VV(p,\ldots,p)$.
Wishart distribution is a generalization of gamma distribution (case $r=1$). 

In generality, Riesz and Wishart distributions does not always have densities, but due to the assumption of existence of densities in Theorem \ref{lukth}, we are not interested in other cases.

\section{Functional equations}\label{funeq}

\subsection{Logarithmic Cauchy functions}
As will be seen, the densities of respective random variables will be given in terms of $w$-logarithmic Cauchy functions, ie. functions $f\colon\VV\to\RR$ that satisfy the following functional equation 
\begin{align}\label{wC}
f(\xx)+f(w(\ee)\yy)=f(w(\xx)\yy),\quad (\xx,\yy)\in\VV^2,
\end{align}
where $w$ is a multiplication algorithm. If $f$ is $w$-logarithmic, then $e^f$ is called $w$-multiplicative. In the following section we will give the form of $w$-logarithmic Cauchy functions for two basic multiplication algorithms, one connected with the quadratic representation 
\begin{align}\label{defw1}
w_1(\xx)=\PP(\xx^{1/2}),
\end{align}
and the other related to a triangular group $\TT$, 
\begin{align}\label{defw2}
w_2(\xx)=t_\xx\in\TT.
\end{align}
Such functions were recently considered without any regularity assumptions in \cite{wC2013}. 

It should be stressed that there exist infinite number of multiplication algorithms. If $\ww$ is a multiplication algorithm, then trivial extensions are given by $\ww^{(k)}(\xx)=\ww(\xx) k$, where $k\in K$ is fixed (Remark \ref{remtriv} explains why this extension is trivial when it comes to multiplicative functions). One may consider also multiplication algorithms of the form $\PP(\xx^\alpha)t_{\xx^{1-2\alpha}}$, which interpolate between the two main examples: $\ww_1$ (which is $\alpha=1/2$) and $\ww_2$ (which is $\alpha=0$). In general any multiplication algorithm may be written in the form $\ww(x)=\PP(\xx^{1/2})k_x$, where $\xx\mapsto k_\xx\in K$.

Functional equation \eqref{wC} for $w_1$ were already considered by \citet{BW2003} for differentiable functions and by \citet{Molnar2006} for continuous functions of real or complex Hermitian 
positive definite matrices of rank greater than $2$. Without any regularity assumptions it was solved on the Lorentz cone by \citet{Wes2007L}. 

Case of $w_2(\xx)=t_\xx\in\TT$ for a triangular group $\TT$, perhaps a bit surprisingly, leads to a different solution. It was indirectly solved for differentiable functions by \citet[Proof of Theorem 3.3]{HaLaZi2008}. 

By \cite[Proposition III.4.3]{FaKo1994}, for any $g$ in the group $G$,
\begin{align*}
\det(g\xx)=(\DDet\,g)^{r/\dim\VV}\det\xx,
\end{align*}
where $\DDet$ denotes the determinant in the space of endomorphisms on $\VV$. Inserting a multiplication algorithm $g=w(\yy)$, $\yy\in\VV$, and $\xx=\ee$ we obtain
\begin{align}\label{zzz}
\DDet\left(w(\yy)\right) =(\det\yy)^{\dim\VV/r}
\end{align}
and hence
\begin{align*}
\det(w(\yy)\xx) =\det\yy\det\xx
\end{align*}
for any $\xx,\yy\in\VV$. This means that $f(\xx)=H(\det\xx)$, where $H$ is generalized logarithmic function, ie. $H(ab)=H(a)+H(b)$ for $a,b>0$, is always a solution to \eqref{wC}, regardless of the choice of multiplication algorithm $w$. If a $w$-logarithmic functions $f$ is additionally $K$-invariant ($f(\xx)=f(k\xx)$ for any $k\in K$), then $H(\det\xx)$ is the only possible solution (Theorem \ref{XXX}).

In \cite{wC2013} the following theorems have been proved. They will be useful in the proof of the main theorems in this paper.
\begin{theorem}[$w_1$-logarithmic Cauchy functional equation]\label{w1th}
Let $f\colon \VV\to\RR$ be a function such that
\begin{align*}
f(\xx)+f(\yy)=f\left(\PP\left(\xx^{1/2}\right)\yy\right),\quad (\xx,\yy)\in\VV^2.
\end{align*}
Then there exists a logarithmic function $H$ such that for any $\xx\in\VV$,
\begin{align*}
f(\xx)=H(\det\xx).
\end{align*}
\end{theorem}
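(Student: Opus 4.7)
The plan is to reduce the statement to the case where $f$ vanishes on all scalar multiples of $\ee$, to establish that the resulting function is $K$-invariant, and then to apply Theorem \ref{XXX}. For the reduction, set $\xx=\alpha\ee$ and $\yy=\beta\ee$ in the defining equation: since $\PP((\alpha\ee)^{1/2})(\beta\ee)=\alpha\beta\ee$, the function $H_0(\alpha):=f(\alpha\ee)$ satisfies $H_0(\alpha)+H_0(\beta)=H_0(\alpha\beta)$ on $\RR_+$, hence is logarithmic. Set $H(t):=H_0(t^{1/r})$, which is also logarithmic. By \eqref{zzz} applied to $w=w_1$ we have $\det(\PP(\xx^{1/2})\yy)=\det\xx\det\yy$, so $\xx\mapsto H(\det\xx)$ is itself $w_1$-logarithmic, and $\tilde f:=f-H\circ\det$ is $w_1$-logarithmic with $\tilde f(\alpha\ee)=0$ for every $\alpha>0$. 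Taking $\xx=\alpha\ee$ in the equation for $\tilde f$ then gives $\tilde f(\alpha\yy)=\tilde f(\yy)$, so $\tilde f$ is scale invariant, and it suffices to prove $\tilde f\equiv 0$.

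The main work is to show that $\tilde f(k\xx)=\tilde f(\xx)$ for every $k\in K$. Swapping $\xx$ and $\yy$ in the defining equation gives the key symmetry
\begin{align*}
\tilde f\bigl(\PP(\xx^{1/2})\yy\bigr)=\tilde f\bigl(\PP(\yy^{1/2})\xx\bigr),\quad(\xx,\yy)\in\VV^2,
\end{align*}
and standard Jordan-algebra identities imply that the two arguments share the same spectrum, hence lie in the same $K$-orbit. My plan is to realise every pair $(\zz,k\zz)$ with $k\in K$ as $\bigl(\PP(\xx^{1/2})\yy,\PP(\yy^{1/2})\xx\bigr)$ for suitable $\xx,\yy\in\VV$. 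Writing $\yy=\PP(\xx^{-1/2})\zz$, this reduces to the single equation $\PP\bigl((\PP(\xx^{-1/2})\zz)^{1/2}\bigr)\xx=k\zz$ in the variable $\xx\in\VV$, which I would analyse through the spectral decomposition of $\zz$ and the induced Peirce decomposition, using that $\xx\mapsto\xx^{1/2}$ is a bijection of $\VV$ and that $\PP(\VV)$ acts transitively on $\VV$. Once $K$-invariance is secured, Theorem \ref{XXX} yields $\tilde f=\tilde H\circ\det$ for some logarithmic $\tilde H$; but $\tilde f(\alpha\ee)=0$ forces $\tilde H\equiv 0$, and we conclude $\tilde f\equiv 0$, so $f=H\circ\det$.

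The main obstacle is establishing $K$-invariance in the absence of any regularity on $f$: continuity, measurability, and density arguments are all unavailable, so every identity must be extracted from the algebraic structure of the equation alone. Restricting $\tilde f$ to a diagonal $\sum\lambda_i\cb_i$ and using that commuting elements $\xx,\yy$ in a common Jordan frame satisfy $\PP(\xx^{1/2})\yy=\xx\yy$ yields $\tilde f\bigl(\sum\lambda_i\cb_i\bigr)=\sum_i h_i(\lambda_i)$ with each $h_i$ logarithmic (classical multiplicative Cauchy on $\RR_+^r$), but upgrading this to permutation symmetry of the $h_i$---the ``rank-$r$'' piece of $K$-invariance---appears to require the full quadratic representation $\PP$ rather than merely the triangular algorithm $w_2$, for which it is well known that the analogous $h_i$'s need not coincide.
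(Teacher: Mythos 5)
First, a point of reference: the paper contains no proof of Theorem \ref{w1th}; it is imported verbatim from \cite{wC2013} (``In \cite{wC2013} the following theorems have been proved''), so there is no in-paper argument to compare yours against. Judged on its own terms, your reduction is correct: $H_0(\alpha)=f(\alpha\ee)$ is logarithmic, $H(t)=H_0(t^{1/r})$ makes $H\circ\det$ a $w_1$-logarithmic function by \eqref{zzz}, and $\tilde f=f-H\circ\det$ is $w_1$-logarithmic, vanishes on $\RR_+\ee$ and is homogeneous of degree $0$; the concluding appeal to Theorem \ref{XXX} would indeed finish the proof if $K$-invariance of $\tilde f$ were established.

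The gap is that your central claim --- that every pair $(\zz,k\zz)$ with $k\in K$ can be realised as $\bigl(\PP(\xx^{1/2})\yy,\PP(\yy^{1/2})\xx\bigr)$ --- is false. Take $\En=S_2(\RR)$, $\zz=\mathrm{diag}(4,1)$ and $k\zz=\mathrm{diag}(1,4)$, conjugation by the rotation through $\pi/2$, which lies in $K$. If $\PP(\xx^{1/2})\yy=\zz$ and $\PP(\yy^{1/2})\xx=k\zz$, put $M=\xx^{1/2}\cdot\yy^{1/2}$; then $M\cdot M^T=\mathrm{diag}(4,1)$ and $M^T\cdot M=\mathrm{diag}(1,4)$, which forces $|M_{11}|\le 1$, $|M_{22}|\le 1$ (from the $(1,1)$ entry of $M^T\cdot M$ and the $(2,2)$ entry of $M\cdot M^T$) and $(\det M)^2=4$. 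But $M$ is similar to the positive definite matrix $\xx^{1/4}\cdot\yy^{1/2}\cdot\xx^{1/4}$, so its eigenvalues are positive with product $2$, whence $\tr M\ge 2\sqrt{2}$ by the AM--GM inequality, contradicting $\tr M=M_{11}+M_{22}\le 2$. Thus the relation ``$(\zz,\zz')$ is realisable'' reaches only a bounded portion of each $K$-orbit in one step (in this example, rotations through $|\theta|<\arccos(2\sqrt{2}/3)\approx 19.5^{\circ}$), and to obtain $K$-invariance you would have to chain finitely many such steps and prove that the one-step reachable set generates the whole orbit --- for every rank and all five types of cones. Nothing of the sort is supplied; the step is announced as a plan (``which I would analyse through the spectral decomposition\ldots''). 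Since this is exactly where the difficulty of the theorem lives --- the analogous statement fails for the triangular algorithm $w_2$, as Theorem \ref{w2th} shows, so any proof must exploit $\PP$ in an essential way --- the proposal as written does not constitute a proof, and its one concrete strategy for the key step is refuted by the example above.
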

\begin{theorem}[$w_2$-logarithmic Cauchy functional equation]\label{w2th}
Let $f\colon \VV\to \RR$ be a function satisfying
\begin{align*}
f(\xx)+f(\yy)=f(t_{\yy}\xx)
\end{align*}
for any $\xx$ and $\yy$ in the cone $\VV$ of rank $r$, $t_{\yy}\in\TT$, where $\TT$ is the triangular group with respect to the Jordan frame $\left(\cb_i\right)_{i=1}^r$. Then there exist generalized logarithmic functions $H_1,\ldots, H_r$ such that for any $\xx\in\VV$,
\begin{align*}
f(\xx)=\sum_{k=1}^r H_k(\Delta_k(\xx)),
\end{align*}
where $\Delta_k$ is the principal minor of order $k$ with respect to $\left(\cb_i\right)_{i=1}^r$.
\end{theorem}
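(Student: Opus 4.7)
The plan is to push the functional equation to the triangular group $\TT$, where it becomes a group homomorphism into $(\RR,+)$, and then exploit the semidirect decomposition $\TT = N \rtimes A$ (with $N$ the unipotent part generated by the $\tau_{\cb_j}(\zz^{(j)})$ and $A = \{\PP(\sum_i \alpha_i^{1/2} \cb_i) : \alpha_i > 0\} \cong (\RR_+^*)^r$ the diagonal part) to show that the homomorphism is trivial on $N$ and splits as a sum of logarithmic functions on $A$.

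Setting $\xx = \ee$ yields $f(\ee) = 0$, so defining $F\colon\TT\to\RR$ by $F(t) = f(t\ee)$ and using $\xx = t_\xx\ee$, $\yy = t_\yy\ee$, $t_\yy\xx = (t_\yy t_\xx)\ee$, the hypothesis becomes $F(t_\xx) + F(t_\yy) = F(t_\yy t_\xx)$. Symmetry of the left side forces $F(st) = F(ts)$, so $F$ is an ordinary group homomorphism $\TT \to (\RR,+)$.

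Next I would prove $F|_N = 0$. The map $\zz \mapsto \tau_{\cb_j}(\zz)$ on $\bigoplus_{k>j} \En_{jk}$ is a group homomorphism into $\TT$: this is visible in $S_r(\RR)$ from the Frobenius matrix identity $\mathcal{F}_j(\alpha^{(j)})\mathcal{F}_j(\beta^{(j)}) = \mathcal{F}_j(\alpha^{(j)} + \beta^{(j)})$ (valid because $\mu_{kj}\mu_{lj} = 0$ for $k,l > j$), and in the general cone follows from \eqref{taucb} together with the vanishing of $[2\zz_1 \Box \cb_j, 2\zz_2 \Box \cb_j]$ for $\zz_1, \zz_2$ in that direct sum. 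Consequently $G_j(\zz) := F(\tau_{\cb_j}(\zz))$ is additive. On the other hand, a direct Peirce computation shows that conjugation of $\tau_{\cb_j}(\zz)$ by $\PP(\sum_i \beta_i \cb_i)$ sends $\zz \in \En_{jk}$ to $(\beta_k/\beta_j)\,\zz$, and the ratio $\beta_k/\beta_j$ can take any positive value. Since $F$ is conjugation-invariant, $G_j(\lambda\zz) = G_j(\zz)$ for every $\lambda > 0$, which combined with $G_j(n\zz) = nG_j(\zz)$ forces $G_j \equiv 0$ on each $\En_{jk}$, hence on the whole direct sum. As the $\tau_{\cb_j}(\zz^{(j)})$ generate $N$, we conclude $F|_N = 0$.

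Therefore $F$ factors through $A$, which is abelian and isomorphic to $(\RR_+^*)^r$ via $a(\alpha) := \PP(\sum_i \alpha_i^{1/2} \cb_i)$ satisfying $a(\alpha)\,a(\alpha') = a(\alpha\alpha')$ componentwise, so $F(a(\alpha)) = \sum_{k=1}^r h_k(\alpha_k)$ for some logarithmic $h_k\colon\RR_+^*\to\RR$. By \eqref{deltast}, \eqref{deltastau} and the identity $\Delta_k(\sum_i \alpha_i \cb_i) = \alpha_1\cdots\alpha_k$, writing $t_\xx = n_\xx\,a(\alpha)$ gives $\alpha_k = \Delta_k(\xx)/\Delta_{k-1}(\xx)$ (with $\Delta_0 \equiv 1$); substituting and telescoping via $h_k(a/b) = h_k(a) - h_k(b)$ rearranges the sum into $f(\xx) = \sum_{k=1}^r H_k(\Delta_k(\xx))$ with $H_r = h_r$ and $H_k = h_k - h_{k+1}$ for $k < r$, each generalized logarithmic. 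The main obstacle will be establishing the structural facts needed for $F|_N = 0$, namely the additivity of $\zz\mapsto\tau_{\cb_j}(\zz)$ on the prescribed subspace and the scaling law under diagonal conjugation, in the abstract Jordan algebra setting; the $S_r(\RR)$ case via Frobenius matrices is transparent, but the general case requires careful Peirce-decomposition bookkeeping via \cite{FaKo1994}.
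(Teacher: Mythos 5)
The paper does not actually prove Theorem \ref{w2th}; it is imported from \cite{wC2013} ("In \cite{wC2013} the following theorems have been proved"), so there is no in-paper argument to compare yours against. Judged on its own, your proof is correct. The reduction to a homomorphism $F\colon\TT\to(\RR,+)$ satisfying $F(st)=F(ts)$ is clean: it uses only that $\TT$ is a group acting simply transitively on $\VV$, whence $t_{t_\yy\xx}=t_\yy t_\xx$ by uniqueness of the triangular decomposition. The two structural facts you flag as the main obstacles are both true and follow from short Peirce computations. For (a), with $\cb_j$ \emph{primitive} the operators $2\zz\Box\cb_j$, $\zz\in\En(\cb_j,\tfrac12)$, pairwise commute: each lowers the Peirce grading by $\tfrac12$, so the commutator can only be nonzero as a map $\En(\cb_j,1)=\RR\cb_j\to\En(\cb_j,0)$, and there $(2\zz_1\Box\cb_j)(2\zz_2\Box\cb_j)\cb_j=2(\zz_1\zz_2)_0$ is symmetric in $\zz_1,\zz_2$; hence $\tau_{\cb_j}(\zz_1)\tau_{\cb_j}(\zz_2)=\tau_{\cb_j}(\zz_1+\zz_2)$. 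For (b), the structure-group identity $g(\zz\Box\cb_j)g^{-1}=(g\zz)\Box((g^*)^{-1}\cb_j)$ applied to $g=\PP(\sum_i\beta_i\cb_i)$ gives exactly the factor $\beta_k/\beta_j$ on $\En_{jk}$. Two minor observations: you do not need $N$ to be a subgroup at all, since applying the homomorphism property of $F$ to the ordered product in \eqref{triangular} already reduces $F(t_\xx)$ to a sum over the factors; and the dilation invariance $G_j(\lambda\zz)=G_j(\zz)$ is only available for $\zz$ confined to a single $\En_{jk}$ (different components scale by different ratios), but that is precisely how you use it, killing $G_j$ on each $\En_{jk}$ and then on the direct sum by additivity. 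The identification $\alpha_k=\Delta_k(\xx)/\Delta_{k-1}(\xx)$ via \eqref{deltast}--\eqref{deltastau} and the final telescoping are correct, so the argument goes through.
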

If we assume in Theorem \ref{w2th} that $f$ is additionally measurable, then functions $H_k$ are measurable. This implies that there exists constants $s_k\in\RR$ such that $H_k(\alpha)=s_k\log\alpha$ and 
$$f(\xx)=\sum_{k=1}^r s_k\log(\Delta_k(\xx))=\log \prod_{k=1}^r \Delta^{s_k}_k(\xx).$$Thus, we obtain the following
\begin{remark}\label{w2rem}
If we impose on $f$  in Theorem \ref{w2th} some mild conditions (eg. measurability), then there exists $s\in\RR^r$ such that for any $\xx\in\VV$,
$$f(\xx)=\log\Delta_s(\xx).$$
\end{remark}
\begin{theorem}\label{XXX}
Let $f\colon\VV\to\RR$ be a function satisfying \eqref{wC}. Assume additionally that $f$ is $K$-invariant, ie. $f(k\xx)=f(\xx)$ for any $k\in K$ and $\xx\in\VV$. Then there exists a logarithmic function $H$ such that for any $\xx\in\VV$,
$$f(\xx)=H(\det\xx).$$
\end{theorem}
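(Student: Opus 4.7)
The plan is to reduce the arbitrary $w$-logarithmic Cauchy equation to the already-solved $w_1$-case (Theorem \ref{w1th}) by exploiting the $K$-invariance of $f$. The first reduction is immediate: $w(\ee)\in G$ satisfies $w(\ee)\ee=\ee$, so $w(\ee)\in K$, and $K$-invariance of $f$ collapses \eqref{wC} to
\begin{align*}
f(\xx)+f(\yy)=f(w(\xx)\yy),\quad\xx,\yy\in\VV.
\end{align*}

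The key step is to upgrade the $K$-invariance of $f$ to invariance under the conjugate subgroup $w(\xx)Kw(\xx)^{-1}$ for every $\xx\in\VV$. The computation is short: for $k\in K$ and $\zz\in\VV$, set $\yy=w(\xx)^{-1}\zz\in\VV$. Using the simplified equation twice and $K$-invariance $f(k\yy)=f(\yy)$,
\begin{align*}
f\bigl(w(\xx)kw(\xx)^{-1}\zz\bigr)=f(w(\xx)k\yy)=f(\xx)+f(k\yy)=f(\xx)+f(\yy)=f(w(\xx)\yy)=f(\zz).
\end{align*}

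To finish, I would use the decomposition recalled in Section~\ref{funeq}: every multiplication algorithm may be written as $w(\xx)=\PP(\xx^{1/2})k_\xx=w_1(\xx)k_\xx$ with $k_\xx\in K$. Since $k_\xx\in K$, we have $w(\xx)Kw(\xx)^{-1}=w_1(\xx)Kw_1(\xx)^{-1}$, so the invariance obtained above says, in particular, that $f$ is invariant under the element $w_1(\xx)k_\xx w_1(\xx)^{-1}$. Applying this to the point $w_1(\xx)\yy$ gives
\begin{align*}
f(w(\xx)\yy)=f\bigl(w_1(\xx)k_\xx\yy\bigr)=f\bigl(w_1(\xx)k_\xx w_1(\xx)^{-1}\cdot w_1(\xx)\yy\bigr)=f(w_1(\xx)\yy),
\end{align*}
so $f(\xx)+f(\yy)=f(\PP(\xx^{1/2})\yy)$ and Theorem \ref{w1th} yields $f(\xx)=H(\det\xx)$ for a logarithmic function $H$.

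The only nontrivial point is the conjugate-invariance upgrade, which is really the content of the argument; once it is in place, the $K$-factor $k_\xx$ in the decomposition of $w(\xx)$ is absorbed by conjugation and the problem reduces mechanically to the known $w_1$-logarithmic case. I do not anticipate any further obstacle, since both the decomposition of $w(\xx)$ and Theorem \ref{w1th} are already available.
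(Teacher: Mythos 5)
Your argument is correct, and every ingredient you use is legitimately available: $w(\ee)\in K$ because $w(\ee)\ee=\ee$ and $K=\{k\in G\colon k\ee=\ee\}$, so $K$-invariance does collapse \eqref{wC} to $f(\xx)+f(\yy)=f(w(\xx)\yy)$; the conjugate-invariance computation is sound (with $\yy=w(\xx)^{-1}\zz\in\VV$ since $G$ preserves $\VV$); and the decomposition $w(\xx)=\PP(\xx^{1/2})k_\xx$ with $k_\xx=\PP(\xx^{1/2})^{-1}w(\xx)\in K$ is exactly the one the paper records in Section~\ref{funeq}. The only point worth noting in the comparison is that the paper does not actually prove Theorem~\ref{XXX}: it is listed among the results imported from the reference [Ko{\l}odziejek, \emph{wC2013}] on $w$-multiplicative Cauchy functions, so there is no in-text proof to measure yours against. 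Your reduction --- absorb $k_\xx$ by the conjugation trick and invoke Theorem~\ref{w1th} --- is a clean, self-contained derivation from results that are stated in the paper, which is arguably a gain in readability over an external citation; the one thing it inherits rather than avoids is the full strength of Theorem~\ref{w1th} (the $w_1$-logarithmic equation with no regularity assumptions), which itself is also only quoted, not proved, here.
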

\begin{lemma}[$w$-logarithmic Pexider functional equation]\label{lem1}
Assume that $a$, $b$, $c$ defined on the cone $\Omega$ satisfy following functional equation
\begin{align*}
a(\xx)+b(\yy)=c(w(\xx)\yy),\quad (\xx,\yy)\in\VV^2.
\end{align*}
Then there exist $w$-logarithmic function $f$ and real constants $a_0, b_0$ such that for any $\xx\in\VV$,
\begin{align*}
a(\xx) & =f(\xx)+a_0,\\
b(\xx) & =f(w(\ee)\xx)+b_0,\\
c(\xx) & =f(\xx)+a_0+b_0.
\end{align*}
\end{lemma}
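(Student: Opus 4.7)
The plan is to use the standard Pexider-type reduction: evaluate the given equation at the neutral element in each argument to express $a$, $b$, $c$ in terms of a single function, and then show that single function must satisfy the $w$-logarithmic Cauchy equation \eqref{wC}. The key algebraic input is that $w$ is a multiplication algorithm, so $w(\xx)\ee=\xx$ for every $\xx\in\VV$; in particular $w(\xx)\ee=\xx$ (used with $\yy=\ee$) and $w(\ee)\yy$ is a well-defined element of $\VV$.

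First I would set $\yy=\ee$ in the hypothesis. Since $w(\xx)\ee=\xx$, this yields
\begin{align*}
c(\xx)=a(\xx)+b(\ee)
\end{align*}
for every $\xx\in\VV$. Next I set $\xx=\ee$ to obtain
\begin{align*}
c(w(\ee)\yy)=a(\ee)+b(\yy),
\end{align*}
which determines $b$ in terms of $c$. Now define $a_0:=a(\ee)$, $b_0:=b(\ee)$ and $f(\xx):=a(\xx)-a_0$. From the first identity $c(\xx)=f(\xx)+a_0+b_0$, and substituting this into the second identity gives $b(\yy)=f(w(\ee)\yy)+b_0$. This produces the three formulas claimed for $a$, $b$, $c$.

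It remains to verify that $f$ satisfies \eqref{wC}. Plugging the three expressions back into the original Pexider equation $a(\xx)+b(\yy)=c(w(\xx)\yy)$ and cancelling the constants $a_0+b_0$ from both sides yields
\begin{align*}
f(\xx)+f(w(\ee)\yy)=f(w(\xx)\yy),\quad(\xx,\yy)\in\VV^2,
\end{align*}
which is exactly the $w$-logarithmic Cauchy equation. There is no real obstacle here: the argument is purely algebraic and uses only the defining property $w(\xx)\ee=\xx$ of a multiplication algorithm, so no regularity of $a$, $b$, $c$ is needed.
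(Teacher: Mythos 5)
Your proof is correct and complete: the substitutions $\yy=\ee$ and $\xx=\ee$, together with the defining property $w(\xx)\ee=\xx$ of a multiplication algorithm, give the stated formulas, and plugging them back into the hypothesis shows $f$ satisfies \eqref{wC}. The paper itself states this lemma without proof (it is imported from the cited reference on $w$-multiplicative Cauchy functions), and your argument is exactly the standard Pexider reduction one would expect there; no regularity assumptions are needed, as you note.
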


\subsection{The Olkin--Baker functional equation}
In the following section we deal with the Olkin-Baker functional equation on irreducible symmetric cones, which is related to the Lukacs independence condition (see proof of the Theorem \ref{lukth}).

Henceforth we will assume that multiplication algorithm $\ww$ additionally is homogeneous of degree $1$, that is $\ww(s\xx)=s\ww(\xx)$ for any $s>0$ and $\xx\in\VV$. It is easy to create a multiplication algorithm without this property, for example:
\begin{align*}
\ww(\xx)=\begin{cases}
 \ww_1(\xx), &\mbox{ if } \det\xx>1, \\
 \ww_2(\xx), &\mbox{ if } \det\xx\leq 1.
 \end{cases}
\end{align*}

The problem of solving
\begin{align}\label{OB}
f(x)g(y)=p(x+y)q(x/y),\quad(x,y)\in(0,\infty)^2
\end{align}
for unknown positive functions $f$, $g$, $p$ and $q$ was first posed in \cite{Olkin3}. Note that in one dimensional case it does not matter whether one considers $q(x/y)$ or $q(x/(x+y))$ on the right hand side of \eqref{OB}. Its general solution was given in \cite{Baker1976}, and later analyzed in \cite{Lajko1979} using a different approach. Recently, in \cite{Mesz2010} and \cite{LajMes12} the equation \eqref{OB} was solved assuming that it is satisfied almost everywhere on $(0,\infty)^2$ for measurable functions which are non-negative on its domain or positive on some sets of positive Lebesgue measure, respectively. Finally, a new derivation of solution to \eqref{OB}, when the equation holds almost everywhere on $(0,\infty)^2$ and no regularity assumptions on unknown positive functions are imposed, was given in \cite{WES4}. The following theorem is concerned with an adaptation of \eqref{OB} (after taking logarithm) to the symmetric cone case.

\begin{theorem}[Olkin--Baker functional equation on symmetric cones] \label{T1}
Let $a$, $b$, $c$ and $d$ be real continuous functions on an irreducible symmetric cone $\VV$ of rank $r$. Assume
\begin{align}\label{main}
a(\xx)+b(\yy)=c(\xx+\yy)+d\left(\gw\left(\xx+\yy\right)\xx\right),\qquad (\xx,\yy)\in \VV^2,
\end{align}
where $\gw^{-1}=\ww$ is a homogeneous of degree $1$ multiplication algorithm. Then there exist constants $C_i\in\RR$, $i=1,\ldots,4$, $\Lambda\in\En$ such that for any $\xx\in\VV$ and $\ub\in\DD=\left\{\xx\in\VV\colon\ee-\xx\in\VV\right\}$,
\begin{align*}
a(\xx)&=\scalar{\Lambda,\xx}+e(\xx)+C_1,\\
b(\xx)&=\scalar{\Lambda,\xx}+f(\xx)+C_2,\\
c(\xx)&=\scalar{\Lambda,\xx}+e(\xx)+f(\xx)+C_3,\\
d(\ub)&=e(w(\ee)\ub)+f(\ee-w(\ee)\ub)+C_4,
\end{align*}
where $e$ and $f$ are continuous $\ww$-logarithmic Cauchy functions and $C_1+C_2=C_3+C_4$.
\end{theorem}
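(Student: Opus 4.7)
The plan is to first reparametrize $(\xx,\yy)\in\VV^2$ by $\ab=\xx+\yy\in\VV$ and $\ub=\gw(\ab)\xx\in\DD$; then $\xx=\ww(\ab)\ub$ and $\yy=\ww(\ab)(\ee-\ub)$, and \eqref{main} reads
\begin{align*}
a(\ww(\ab)\ub)+b(\ww(\ab)(\ee-\ub))=c(\ab)+d(\ub),\qquad(\ab,\ub)\in\VV\times\DD. \tag{$\star$}
\end{align*}

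Next I will extract the linear part $\scalar{\Lambda,\cdot}$ by exploiting the homogeneity $\ww(s\ab)=s\ww(\ab)$ for $s>0$. Applying \eqref{main} to $(s\xx,s\yy)$ leaves $\ub$ unchanged, and subtracting from \eqref{main} gives the additive Pexider equation
\begin{align*}
[a(s\xx)-a(\xx)]+[b(s\yy)-b(\yy)]=c(s(\xx+\yy))-c(\xx+\yy),\qquad(\xx,\yy)\in\VV^2.
\end{align*}
Continuity forces each bracket to be affine in its argument. Computing $a(s_1 s_2\xx)-a(\xx)$ in two ways and using the swap $s_1\leftrightarrow s_2$ yields the one-parameter representation $a(s\xx)-a(\xx)=(s-1)\scalar{\Lambda,\xx}+p_a\log s$ for some $\Lambda\in\En$ and $p_a\in\RR$, with analogous formulas for $b$ and $c$ sharing the same $\Lambda$ and with $p_a+p_b$ in place of $p_a$ for $c$. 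Setting $\tilde a=a-\scalar{\Lambda,\cdot}$, $\tilde b=b-\scalar{\Lambda,\cdot}$, $\tilde c=c-\scalar{\Lambda,\cdot}$ preserves \eqref{main}---the $\Lambda$ terms cancel across $\xx+\yy$---and makes the new functions log-homogeneous: $\tilde a(s\xx)=\tilde a(\xx)+p_a\log s$. From here I work with $\tilde a,\tilde b,\tilde c$ and denote them again $a,b,c$.

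Plugging $\ub=\ee/2$ into $(\star)$ (so $\ee-\ub=\ee/2$ and $\ww(\ab)(\ee/2)=\ab/2$) gives $c(\ab)=a(\ab/2)+b(\ab/2)+\mathrm{const}$; log-homogeneity upgrades this to $c=a+b+C$ for a scalar $C$. Substituting this into $(\star)$ and using $(\star)$ at $\ab=\ee$ to eliminate $d$ yields the key identity
\begin{align*}
F_a(\ab,\ub)+F_b(\ab,\ee-\ub)=0,\qquad(\ab,\ub)\in\VV\times\DD,
\end{align*}
where $F_a(\ab,\ub):=a(\ww(\ab)\ub)-a(\ab)-a(\ww(\ee)\ub)+a(\ee)$ and $F_b$ is defined analogously. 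Directly from the definition $F_a(\ee,\cdot)\equiv 0$, and log-homogeneity gives $F_a(s\ab,\ub)=F_a(\ab,\ub)$, so $F_a$ is degree-$0$ homogeneous in $\ab$. The desired $\ww$-logarithmic Cauchy identity for $\xx\mapsto a(\xx)-a(\ee)$ is precisely $F_a\equiv 0$.

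The chief obstacle is to upgrade $F_a+F_b=0$ to the two independent identities $F_a\equiv F_b\equiv 0$. To obtain a second, independent equation I swap $\xx\leftrightarrow\yy$ in \eqref{main} and subtract, producing $h(\ww(\ab)\ub)-h(\ww(\ab)(\ee-\ub))=d(\ub)-d(\ee-\ub)$ with $h=a-b$. The resulting coupled system---$F_a+F_b=0$, the swap identity, and the decomposition $c=a+b+C$---I rearrange into the $\ww$-logarithmic Pexider form treated by Lemma \ref{lem1}; that lemma then yields continuous $\ww$-logarithmic Cauchy functions $e,f$ with $a-a(\ee)=e$ and $b-b(\ee)=f$. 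The formula for $d$ is read off from $(\star)$ at $\ab=\ee$ as $d(\ub)=e(\ww(\ee)\ub)+f(\ee-\ww(\ee)\ub)+C_4$ with $C_4=c(\ee)-a(\ee)-b(\ee)$, and reinstating the linear part $\scalar{\Lambda,\cdot}$ absorbed in Step~2 recovers the formulas in the statement. The relation $C_1+C_2=C_3+C_4$ then follows by evaluating \eqref{main} at $\xx=\yy=\ee/2$ (where $\ub=\ee/2$) and using $e(\ee)=f(\ee)=0$.
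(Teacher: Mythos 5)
Your first half is correct and coincides with the paper's own argument: the extraction of the linear part via the homogeneity of $\ww$, the cocycle computation giving $a(s\xx)-a(\xx)=(s-1)\scalar{\Lambda,\xx}+p_a\log s$, and the identity $c=a+b+C$ obtained from $\ub=\ee/2$ all appear there in essentially the same form. The gap is at the decoupling step, which is the crux of the theorem. Your ``second, independent equation'' is not independent: in the coordinates $(\ab,\ub)$ the swap $\xx\leftrightarrow\yy$ is exactly the difference of $(\star)$ at $(\ab,\ub)$ and $(\star)$ at $(\ab,\ee-\ub)$, hence an algebraic consequence of what you already have. Concretely, with $h=a-b$ and $F_h=F_a-F_b$, the swap identity says $F_h(\ab,\ub)=F_h(\ab,\ee-\ub)$; substituting $F_b(\ab,\ee-\ub)=-F_a(\ab,\ub)$ and $F_b(\ab,\ub)=-F_a(\ab,\ee-\ub)$ (both instances of your key identity) reduces it to $F_a(\ab,\ub)+F_a(\ab,\ee-\ub)=F_a(\ab,\ub)+F_a(\ab,\ee-\ub)$, a tautology. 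So the ``coupled system'' you propose to hand to Lemma \ref{lem1} is underdetermined, and the claim that it ``rearranges into the $\ww$-logarithmic Pexider form'' for $a$ alone is exactly the assertion that needs proof.

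The paper's device for isolating $a$ is a limiting argument you are missing. Insert $\xx=\alpha\,\ww(\vb)\ub$ and $\yy=\ww(\vb)(\ee-\alpha\ub)$ into the reduced equation for $0<\alpha<1$ and $(\ub,\vb)\in\DD\times\VV$; then $\xx+\yy=\vb$ and $\gw(\xx+\yy)\xx=\alpha\ub$, and log-homogeneity pulls a $-p_a\log\alpha$ out of the $a$-term. Letting $\alpha\to0$, continuity of $\bar b$ forces the $b$-term to converge to $\bar b(\vb)$, which no longer involves $\ub$; consequently $\lim_{\alpha\to0}\left\{d(\alpha\ub)-p_a\log\alpha\right\}$ exists and one obtains $\bar a(\ww(\vb)\ub)=\bar a(\vb)+g_0(\ub)$ for a single function $g_0$ of $\ub$ --- a genuine $\ww$-logarithmic Pexider equation to which Lemma \ref{lem1} applies, yielding $e$. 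The symmetric substitution $\xx=\ww(\vb)(\ee-\alpha\ub)$, $\yy=\alpha\,\ww(\vb)\ub$ handles $b$ and yields $f$. Without an argument of this kind (or some other mechanism that genuinely separates $F_a$ from $F_b$), your proof does not go through.
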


We will need following simple lemma. For the elementary proof we refer to \cite[Lemma 3.2]{Kolo2013}.
\begin{lemma}[Additive Pexider functional equation on symmetric cones]\label{lemma1}
Let $a$, $b$ and $c$ be measurable functions on a symmetric cone $\VV$ satisfying 
\begin{align}\label{lempex}
a(\xx)+b(\yy)=c(\xx+\yy),\qquad (\xx,\yy)\in \VV^2.
\end{align}
Then there exist constants $\alpha, \beta\in\RR$ and $\lambda\in\En$ such that for all $\xx\in\VV$,
\begin{align}\label{abcdef}\begin{split}
a(\xx)&=\scalar{\lambda,\xx}+\alpha, \\
b(\xx)&=\scalar{\lambda,\xx}+\beta, \\
c(\xx)&=\scalar{\lambda,\xx}+\alpha +\beta.
\end{split}\end{align} 
\end{lemma}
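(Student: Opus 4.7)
The plan is to reduce the Pexider equation \eqref{lempex} to the additive Cauchy equation on $\VV$ for a single auxiliary function, extend that function to the whole Jordan algebra $\En$, and then invoke the classical Fr\'echet theorem that every measurable additive map on $\RR^n\cong\En$ is linear. The final step is to glue the resulting linear forms for $a$ and $b$ using \eqref{lempex} itself.

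First I would exploit the associativity $c((\xx+\yy)+\zz)=c(\xx+(\yy+\zz))$ available for all $\xx,\yy,\zz\in\VV$. Two applications of \eqref{lempex} yield
\begin{align*}
a(\xx+\yy)-a(\xx) \;=\; b(\yy+\zz)-b(\zz),\qquad (\xx,\yy,\zz)\in\VV^3.
\end{align*}
Since the left side does not depend on $\zz$ and the right side does not depend on $\xx$, each equals a common function $\phi(\yy)$. Interchanging $\xx\leftrightarrow\yy$ in $a(\xx+\yy)=a(\xx)+\phi(\yy)$ forces $a-\phi$ to be a constant $-C\in\RR$, so $\tilde a(\xx):=a(\xx)+C$ satisfies the Cauchy equation
\begin{align*}
\tilde a(\xx+\yy)=\tilde a(\xx)+\tilde a(\yy),\qquad (\xx,\yy)\in\VV^2.
\end{align*}

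Next I would extend $\tilde a$ from $\VV$ to an additive measurable function $\tilde A$ on $\En$. Because $\VV$ is open and contains $\ee$, every $\zz\in\En$ satisfies $\zz+t\ee\in\VV$ for all sufficiently large $t>0$; set
\begin{align*}
\tilde A(\zz):=\tilde a(\zz+t\ee)-\tilde a(t\ee).
\end{align*}
The Cauchy identity on $\VV$ applied to $\tilde a(\zz+t_1\ee)+\tilde a(t_2\ee)$ versus $\tilde a(\zz+t_2\ee)+\tilde a(t_1\ee)$ shows that this value is independent of the admissible choice of $t$, and an analogous computation yields additivity of $\tilde A$ on all of $\En$. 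Measurability is immediate: for any fixed large $t$ the formula exhibits $\tilde A$ as a Borel combination of $\tilde a$ on the open subset $\{\zz\in\En:\zz+t\ee\in\VV\}$, and these subsets exhaust $\En$ as $t\to\infty$. By Fr\'echet's theorem on measurable additive maps, $\tilde A(\zz)=\scalar{\lambda,\zz}$ for some $\lambda\in\En$, hence $a(\xx)=\scalar{\lambda,\xx}+\alpha$ with $\alpha=-C$.

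Finally, the same procedure applied to $b$ (with the roles of the two arguments of the equation interchanged) gives $b(\xx)=\scalar{\mu,\xx}+\beta$ for some $\mu\in\En$ and $\beta\in\RR$. Substituting both expressions into \eqref{lempex} and letting $\xx$ range over the nonempty open set $\{\xx\in\VV:\zz-\xx\in\VV\}$ for a fixed $\zz\in\VV$ forces $\scalar{\lambda-\mu,\xx}$ to be constant on an open set and hence $\lambda=\mu$; then $c(\zz)=\scalar{\lambda,\zz}+\alpha+\beta$ follows directly. The only genuinely delicate point in this outline is the preservation of measurability under the extension from the cone $\VV$ to all of $\En$; the explicit local formula for $\tilde A$ handles this cleanly, so no step is expected to pose a real obstacle.
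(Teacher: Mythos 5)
Your proof is correct: the reduction of \eqref{lempex} to the Cauchy equation via the double evaluation of $c(\xx+\yy+\zz)$, the well-defined extension $\tilde A(\zz)=\tilde a(\zz+t\ee)-\tilde a(t\ee)$ to all of $\En$, and the appeal to Fr\'echet's theorem for measurable additive maps are all sound, and the final identification $\lambda=\mu$ on an open set closes the argument. The paper itself does not prove this lemma but defers to the ``elementary proof'' in the cited reference, and your argument is exactly the standard elementary route that reference takes, so nothing further is needed.
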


Now we can come back and give a new proof the Olkin--Baker functional equation.
\begin{proof} [ Prof of Theorem~\ref{T1}]
In the first part of the proof we adapt the argument given in \cite{WES4}, where the analogous result on $(0,\infty)$ was analyzed, to the symmetric cone setting.

For any $s>0$ and $(\xx,\yy)\in \VV^2$ we get
\begin{align}\label{mainr}
a(s\xx)+b(s\yy)=c(s(\xx+\yy))+d\left(g(s\xx+s\yy)s\xx\right).
\end{align}
Since $w$ is homogeneous of degree $1$ we have $g(s\xx)=\tfrac{1}{s}g(\xx)$ and so $g(s\xx+s\yy)s\xx=g(\xx+\yy)\xx$ for any $s>0$.
Subtracting now \eqref{main} from \eqref{mainr} for any $s>0$ we arrive at the additive Pexider equation on symmetric cone $\VV$,
\begin{align*}
a_s(\xx)+b_s(\yy)=c_s(\xx+\yy),\qquad (\xx,\yy)\in \VV^2,
\end{align*}
where  $a_s$, $b_s$ and $c_s$ are functions defined by $a_s(\xx):=a(s\xx)-a(\xx)$, $b_s(\xx):=b(s\xx)-b(\xx)$ and $c_s(\xx):=c(s\xx)-c(\xx)$.

Due to continuity of $a$, $b$ and $c$ and Lemma \ref{lemma1} it follows that for any $s>0$ there exist constants $\lambda(s)\in\En$, $\alpha(s)\in\RR$ and $\beta(s)\in\RR$ such that for any $\xx\in\VV$, 
\begin{align*}
a_s(\xx) &= \scalar{\lambda(s),\xx}+\alpha(s),\\
b_s(\xx) &= \scalar{\lambda(s),\xx}+\beta(s),\\
c_s(\xx) &= \scalar{\lambda(s),\xx}+\alpha(s)+\beta(s).
\end{align*}
By the definition of $a_s$ and the above observation it follows that for any $(s,t)\in(0,\infty)^2$ and $\zz\in\VV$ 
\begin{align*}
a_{st}(\zz)=a_t(s\zz)+a_s(\zz).
\end{align*}
Hence, 
\begin{align}\label{naQ}
\scalar{\lambda(st),\zz}+\alpha(st)=\scalar{\lambda(t),s\zz}+\alpha(t)+\scalar{\lambda(s),\zz}+\alpha(s).
\end{align}
Since \eqref{naQ} holds for any $\zz\in\VV$ we see that $\alpha(st)=\alpha(s)+\alpha(t)$ for all $(s,t)\in(0,\infty)^2$. That is $\alpha(s)=k_1\log\,s$ for $s\in(0,\infty)$, where $k_1$ is a real constant. 

On the other hand 
\begin{align}\label{trr}
\scalar{\lambda(st),\zz}=\scalar{\lambda(s),\zz}+\scalar{\lambda(t),s\zz}=\scalar{\lambda(t),\zz}+\scalar{\lambda(s),t\zz}
\end{align}
since one can interchange $s$ and $t$ on the left hand side.
Putting $s=2$ and denoting $\Lambda=\lambda(2)$ we obtain
 \begin{align*}
 \scalar{\lambda(t),\zz}=\scalar{\Lambda,\zz}(t-1)
 \end{align*}
 for $t>0$ and $\zz\in \VV$. 
 It then follows that for all $s\in(0,\infty)$ and $\zz\in \VV$, 
 \begin{align}\label{asz}
a_s(\zz)=a(s\zz)-a(\zz)=\scalar{\Lambda,\zz}(s-1)+k_1\log\,s.
\end{align}

Let us define function $\bar{a}$ by formula
\begin{align*}
\bar{a}(\xx)=a(\xx)-\scalar{\Lambda,\xx}.
\end{align*}
From \eqref{asz} we get
\begin{align}\label{prope}
\bar{a}(s\xx)=\bar{a}(\xx)+k_1\log s
\end{align}
for $s>0$ and $\xx\in \VV$. \\
Analogous considerations for function $b_s$ gives existence of constant $k_2$ such that $\bar{b}(s\xx)=\bar{b}(\xx)+k_2\log s$, where
\begin{align*}
\bar{b}(\xx)=b(\xx)-\scalar{\Lambda,\xx},
\end{align*}
hence $\bar{c}(s\xx)=\bar{c}(\xx)+(k_1+k_2)\log s$  and
\begin{align*}
\bar{c}(\xx)=c(\xx)-\scalar{\Lambda,\xx}
\end{align*}
for any $s>0$ and $\xx\in\VV$. \\
Functions $\bar{a}$, $\bar{b}$, $\bar{c}$ and $d$ satisfy original Olkin-Baker functional equation:
\begin{align}\label{efgh}
\bar{a}(\xx)+\bar{b}(\yy)=\bar{c}(\xx+\yy)+d\left(\gw\left(\xx+\yy\right)\xx\right),\quad(\xx,\yy)\in\VV^2.
\end{align}
Taking $\xx=\yy=\vb\in\VV$ in \eqref{efgh}, we arrive at
\begin{align}\label{limit2}
\bar{a}(\vb)+\bar{b}(\vb)=\bar{c}(2\vb)+d(\tfrac{1}{2}\ee)=\bar{c}(\vb)+(k_1+k_2)\log 2+d(\tfrac{1}{2}\ee).
\end{align}
Insert $\xx=\alpha \ww(\vb)\ub$ and $\yy=\ww(\vb)(\ee-\alpha \ub)$ into \eqref{efgh} for $0<\alpha<1$ and $(\ub,\vb)\in(\DD,\VV)$.
Using \eqref{prope} we obtain
\begin{align*}
\bar{a}(\ww(\vb)\ub)+\bar{b}(\ww(\vb)(\ee-\alpha \ub))=\bar{c}(\vb)+d\left(\alpha\ub\right)-k_1\log\alpha,\quad(\ub,\vb)\in(\DD,\VV).
\end{align*}
Let us observe, that due to continuity of $\bar{b}$ on $\VV$ and $\lim_{\alpha\to0}\left\{\ww(\vb)(\ee-\alpha \ub)\right\}=\ww(\vb)\ee=\vb\in\VV$ (convergence in the norm generated by scalar product $\scalar{\cdot,\cdot}$), limit as $\alpha\to 0$ of the left hand side of the above equality exists. Hence, the limit of the right hand side also exists and
\begin{align}\label{limit1}
\bar{a}(\ww(\vb)\ub)+\bar{b}(\vb)=\bar{c}(\vb)+\lim_{\alpha\to 0}\left\{d(\alpha\ub) -k_1\log\alpha \right\},\quad (\ub,\vb)\in(\DD,\VV).
\end{align}
Subtracting \eqref{limit1} from \eqref{limit2} we have
\begin{align}\label{e}
\bar{a}(\ww(\vb)\ub)=\bar{a}(\vb)+g(\ub)
\end{align}
for $\ub\in\DD,\vb\in\VV$, where $g(\ub)=\lim_{\alpha\to 0}\left\{d(\alpha\ub) -k_1\log\alpha\right\}-(k_1+k_2)\log 2-d(\tfrac{1}{2}\ee)$.  
Due to the property \eqref{prope} equation \eqref{e} holds for any $\ub\in\VV$, so we arrive at the $\ww$-logarithmic Pexider equation. Lemma \ref{lem1} implies that there exist $\ww$-logarithmic function $e$ such that
\begin{align*}
\bar{a}(\xx)=e(\xx)+C_1
\end{align*}
for any $\xx\in\VV$ and a constant $C_1\in\RR$. Function $e$ is continuous, because $\bar{a}$ is continuous. Coming back to the definition of $\bar{a}$, we obtain
\begin{align*}
a(\xx)=\scalar{\Lambda,\xx}+e(\xx)+C_1, \quad \xx\in\VV.
\end{align*}

Analogously for function $b$, considering equation \eqref{efgh} for $\xx=\ww(\vb)(\ee-\alpha \ub)$ and $\yy=\alpha \ww(\vb)\ub$ after passing to the limit as $\alpha\to0$, we show that there exist continuous $\ww$-logarithmic function $f$ such that
\begin{align*}
b(\xx)=\scalar{\Lambda,\xx}+f(\xx)+C_2, \quad \xx\in\VV
\end{align*}
for a constant $C_2\in\RR$. The form of $c$ follows from \eqref{limit2}. Taking $\xx=w(\ee)\ub$ and $\yy=\ee-w(\ee)\ub$  in \eqref{efgh} for $\ub\in\DD$, we obtain the form of $d$.
\end{proof}

\section{The Lukacs-Olkin-Rubin theorem without invariance of the quotient}\label{secLUK}
In the following section we prove the density version of Lukacs-Olkin-Rubin theorem for any multiplication algorithm $w$ satisfying 
\begin{enumerate}[(i)]
\item $\ww(s\xx)=s\ww(\xx)$ for $s>0$ and $\xx\in\VV$,
\item differentiability of mapping $\VV\ni\xx\mapsto\ww(\xx)\in G$.
\end{enumerate}
We assume (ii) to ensure that Jacobian of the considered transformation exists. We start with the direct result, showing that the considered measures have desired property. The converse result is given in Theorem \ref{lukth}. For every generalized multiplication $\ww$, the family of these $\ww$-Wishart measures (as defined in \eqref{fxy}) contains the Wishart laws. For $\ww=\ww_1$, there are no other distributions, while the $\ww_2$-Wishart measures consist of the Riesz distributions. It is an open question whether there is a generalized multiplication $\ww$ that leads to other probability measures in this family.

\begin{theorem}\label{impl}
Let $\ww$ be a multiplication algorithm satisfying condition (ii) and define $\gw=\ww^{-1}$.
Suppose that $X$ and $Y$ are independent random variables with densities given by 
\begin{align}\begin{split}\label{fxy}
f_X(\xx) =C_X e(\xx)\exp\scalar{\Lambda,\xx}I_\VV(\xx),\\
f_Y(\xx) =C_Y f(\xx)\exp\scalar{\Lambda,\xx}I_\VV(\xx),
\end{split}\end{align}
where $e$ and $f$ are $\ww$-multiplicative functions, $\Lambda\in\En$ and $\En$ is the Euclidean Jordan algebra associated with the irreducible symmetric cone $\VV$.\\
Then vector $(U,V)=\left(\gw(X+Y)X,X+Y\right)$ have independent components.
\end{theorem}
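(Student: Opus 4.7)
The plan is the usual change-of-variables approach. Since $X$ and $Y$ are independent with product density $f_X(\xx) f_Y(\yy)$ on $\VV^2$, I will push this joint law forward under the bijection
\[
\Phi \colon (\xx,\yy) \mapsto (\ub,\vb) := \bigl(\gw(\xx+\yy)\xx,\, \xx+\yy\bigr),
\]
whose inverse sends $(\ub,\vb) \in \DD \times \VV$ to
\[
\xx = \ww(\vb)\ub, \qquad \yy = \vb - \ww(\vb)\ub = \ww(\vb)(\ee - \ub),
\]
where the last equality uses $\ww(\vb)\ee = \vb$. Once the joint density of $(U,V)$ is written down, the $\ww$-multiplicative functional equation applied to $e$ and $f$ separates the $\ub$ and $\vb$ dependence, and independence will follow.

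For the Jacobian, rather than differentiate $\vb \mapsto \ww(\vb)$ directly (which requires assumption (ii) but would produce an ugly operator $B_\vb = D_\vb[\ww(\vb)\ub]$), I will factor $\Phi$ as the composition of the linear change $(\xx,\yy) \mapsto (\vb,\xx) = (\xx+\yy,\xx)$, which has Jacobian $1$, followed by $(\vb,\xx) \mapsto (\vb,\ub) = (\vb,\gw(\vb)\xx)$, which for each fixed $\vb$ is a linear map in $\xx$ with operator determinant $\DDet(\gw(\vb)) = (\det \vb)^{-\dim\VV/r}$ by \eqref{zzz}. Thus $|\det D\Phi| = (\det \vb)^{-\dim\VV/r}$ and the Jacobian of the inverse is $(\det \vb)^{\dim\VV/r}$. (As a sanity check, the direct block-matrix argument
\[
\det\begin{pmatrix} \ww(\vb) & B_\vb \\ -\ww(\vb) & I-B_\vb \end{pmatrix}
= \det\begin{pmatrix} \ww(\vb) & B_\vb \\ 0 & I \end{pmatrix}
= \DDet(\ww(\vb))
\]
gives the same answer after adding row~1 to row~2.)

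Combining these ingredients, the density of $(U,V)$ on $\DD \times \VV$ is
\[
f_{U,V}(\ub,\vb) = C_X C_Y\, e(\ww(\vb)\ub)\, f(\ww(\vb)(\ee-\ub))\, \exp\scalar{\Lambda,\ww(\vb)\ub + \ww(\vb)(\ee-\ub)} (\det \vb)^{\dim\VV/r}.
\]
The exponent collapses to $\scalar{\Lambda,\vb}$ because $\ww(\vb)\ub + \ww(\vb)(\ee-\ub) = \ww(\vb)\ee = \vb$. Applying the $\ww$-multiplicative identity $e(\ww(\vb)\ub) = e(\vb)\,e(\ww(\ee)\ub)$ (with $\xx = \vb$, $\yy = \ub$) and similarly $f(\ww(\vb)(\ee-\ub)) = f(\vb)\,f(\ww(\ee)(\ee-\ub))$, the joint density factors as
\[
f_{U,V}(\ub,\vb) = \bigl[ e(\ww(\ee)\ub)\, f(\ww(\ee)(\ee-\ub))\bigr]\cdot \bigl[ C_X C_Y\, e(\vb)\, f(\vb)\, (\det \vb)^{\dim\VV/r}\, e^{\scalar{\Lambda,\vb}}\bigr],
\]
a product of a function of $\ub$ and a function of $\vb$, which proves independence.

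The main obstacle is really conceptual rather than computational: one must notice that evaluating $e$ and $f$ at $\ww(\vb)\ub$ and $\ww(\vb)(\ee-\ub)$ is \emph{exactly} the configuration to which the $\ww$-multiplicative equation applies, so the $\vb$-dependence cleanly separates. Everything else (the Jacobian, the $\ee-\ub \in \VV$ requirement which is guaranteed by $\ub \in \DD$, and the positivity which makes the densities integrable) is routine.
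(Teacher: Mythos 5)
Your proposal is correct and follows essentially the same route as the paper: the same bijection and inverse, the same Jacobian $(\det\vb)^{\dim\VV/r}$ (the paper uses exactly the block-determinant computation you relegate to a sanity check), and the same use of the $\ww$-multiplicative identity to separate the $\ub$- and $\vb$-dependence. No gaps.
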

Note that if $\ww(\xx)=\ww_1(\xx)=\PP(\xx^{1/2})$, then there exist positive constants $\kappa_X$ and $\kappa_Y$ such that $e(\xx)=(\det\xx)^{\kappa_X-\dim\VV/r}$ and $f(\xx)=(\det\xx)^{\kappa_Y-\dim\VV/r}$. In this case $-\Lambda=:\ab\in\VV$ and $(X,Y)\sim \gamma_{\kappa_X,\ab}\otimes \gamma_{\kappa_Y,\ab}$.
Similarly, if $\ww(\xx)=\ww_2(\xx)=t_\xx$, $X$ and $Y$ follow Riesz distributions with the same scale parameter $-\Lambda\in\VV$. In general we do not know whether $\ab=-\Lambda$ should always belong to $\VV$.
\begin{proof}
Let $\psi\colon \VV\times\VV\to\DD\times\VV$ be a mapping defined through
\begin{align*}
\psi(\xx,\yy)=\left(\gw(\xx+\yy)\xx,\xx+\yy\right)=(\ub,\vb).
\end{align*}
Then $(U,V)=\psi(X,Y)$. The inverse mapping $\psi^{-1}\colon \DD\times\VV\to\VV\times\VV$ is given by
\begin{align*}
(\xx,\yy)=\psi^{-1}(\ub,\vb)=\left(\ww(\vb)\ub,\ww(\vb)(\ee-\ub)\right),
\end{align*}
hence $\psi$ is a bijection. We are looking for the Jacobian of the map $\psi^{-1}$, that is, the determinant of the linear map
\begin{align*}
\begin{pmatrix}
d\ub\\
d\vb
\end{pmatrix}
\mapsto
\begin{pmatrix}
d\xx \\
d\yy 
\end{pmatrix}
=
\begin{pmatrix}
d\xx/d\ub & d\xx/d\vb \\
d\yy/d\ub & d\yy/d\vb
\end{pmatrix}
\begin{pmatrix}
d\ub\\
d\vb
\end{pmatrix}.
\end{align*}
We have
\begin{align*}
J=\left| 
\begin{array}{cc}
\ww(\vb) & d\xx/d\vb \\
-\ww(\vb) & Id_\VV-d\xx/d\vb
\end{array}
\right| 
=
\left|
\begin{array}{cc}
\ww(\vb) & d\xx/d\vb \\
0 & Id_\VV
\end{array}
\right| = \DDet(\ww(\vb)).
\end{align*}
where $\DDet$ denotes the determinant in the space of endomorphisms on $\VV$. By \eqref{zzz} we get
\begin{align*}
\DDet\left(\ww\left(\vb\right)\right)=(\det\vb)^{\dim\VV/r}.
\end{align*}
Now we can find the joint density of $(U,V)$. Since $(X,Y)$ have independent components, we obtain
\begin{align}\label{kukuku}
f_{(U,V)}(\ub,\vb)=(\det\vb)^{\dim\VV/r}f_X(\ww(\vb)\ub)f_Y(\ww(\vb)(\ee-\ub))
\end{align}
We assumed \eqref{fxy}, thus there exist $\Lambda\in\En$, $C_X, C_Y \in\RR$ and $\ww$-multiplicative functions $e$, $f$ such that
\begin{align*}
f_{(U,V)}(\ub,\vb)= & (\det\vb)^{\dim\VV/r}f_X(\ww(\vb)\ub)f_Y(\ww(\vb)(\ee-\ub))  \\
	= &C_1C_2\, (\det\vb)^{\dim\VV/r} e(\ww(\vb)\ub) f(\ww(\vb)(\ee-\ub)) e^{\scalar{\Lambda,\vb}}I_\VV(\ww(\vb)\ub)I_\VV(\ww(\vb)(\ee-\ub)) \\
	= &C_1C_2\, (\det\vb)^{\dim\VV/r} e(\vb)f(\vb) e^{\scalar{\Lambda,\vb}}I_\VV(\vb) \,\, e(\ww(\ee)\ub) f(\ww(\ee)(\ee-\ub)) I_\DD(\ub), \\
	= & f_U(\ub) \, f_V(\vb),
\end{align*}
what completes the proof.
\end{proof}

To prove the characterization of given measures, we need to show that the inverse implication is also valid.
The following theorem generalizes results obtained in \cite{BW2002, HaLaZi2008, Kolo2013}. We consider quotient $U$ for any multiplication algorithm $\ww$ satisfying conditions (i) and (ii) given at the beginning of this section (note that multiplication algorithms $\ww_1$ and $\ww_2$ defined in \eqref{defw1} and \eqref{defw2}, respectively, satisfy both of these conditions). Respective densities are then expressed in terms of $\ww$-multiplicative Cauchy functions. 

\begin{theorem}[The Lukacs-Olkin-Rubin theorem with densities on symmetric cones]\label{lukth}
Let $X$ and $Y$ be independent rv's valued in irreducible symmetric cone $\VV$ with strictly positive and continuous densities. Set $V=X+Y$ and $U=\gw\left(X+Y\right)X$ for any multiplication algorithm $\ww=\gw^{-1}$ satisfying conditions (i) and (ii). If $U$ and $V$ are independent then there exist $\Lambda\in\En$ and $\ww$-multiplicative functions $e$, $f$ such that \eqref{fxy} holds.

In particular,
\begin{enumerate}
	\item if $\gw(\xx)=\gw_1(\xx)=\PP(\xx^{-1/2})$, then there exist constants $p_i>\dim\VV/r-1$, $i=1,2$, and $\ab\in\VV$ such that $X\sim \gamma_{p_1,\ab}$ and $Y\sim \gamma_{p_2,\ab}$,
	\item if $\gw(\xx)=\gw_2(\xx)=t_{\xx}^{-1}$ , then there exist constants $s_i=(s_{i,j})_{j=1}^r$, $s_{i,j}>(j-1)d/2$, $i=1,2$, and $\ab\in\VV$ such that $X\sim R_{s_1,\ab}$ and $Y\sim R_{s_2,\ab}$.	
\end{enumerate}
\end{theorem}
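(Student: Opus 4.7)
The plan is to reduce the theorem to the Olkin--Baker equation \eqref{main} and then invoke Theorem \ref{T1} together with the structural results of Section \ref{funeq}. The only genuinely new work lies in converting independence of $U$ and $V$ into the functional equation; everything afterwards is bookkeeping.

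First I would repeat the change-of-variables argument from the proof of Theorem \ref{impl}: the map $(\xx,\yy)\mapsto(\ub,\vb)=(\gw(\xx+\yy)\xx,\xx+\yy)$ is a diffeomorphism $\VV^2\to\DD\times\VV$ with inverse $(\ub,\vb)\mapsto(\ww(\vb)\ub,\ww(\vb)(\ee-\ub))$ and Jacobian $(\det\vb)^{\dim\VV/r}$, whence
\begin{align*}
f_{(U,V)}(\ub,\vb)=(\det\vb)^{\dim\VV/r}\,f_X(\ww(\vb)\ub)\,f_Y(\ww(\vb)(\ee-\ub)).
\end{align*}
This is continuous and strictly positive on $\DD\times\VV$, so the marginals $f_U$ on $\DD$ and $f_V$ on $\VV$ are continuous and strictly positive as well. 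The independence assumption $f_{(U,V)}=f_U\cdot f_V$ together with the substitution $\xx=\ww(\vb)\ub$, $\yy=\ww(\vb)(\ee-\ub)$ (so that $\vb=\xx+\yy$ and $\ub=\gw(\xx+\yy)\xx$) then gives, after taking logarithms,
\begin{align*}
\log f_X(\xx)+\log f_Y(\yy)=\left[\log f_V(\xx+\yy)-\tfrac{\dim\VV}{r}\log\det(\xx+\yy)\right]+\log f_U(\gw(\xx+\yy)\xx).
\end{align*}
This is precisely \eqref{main} with $a=\log f_X$, $b=\log f_Y$, $d=\log f_U$, and $c$ the bracketed expression, all continuous on their respective domains.

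Applying Theorem \ref{T1} then yields $\Lambda\in\En$, real constants $C_1,C_2$, and continuous $\ww$-logarithmic Cauchy functions $\tilde e,\tilde f$ with
\begin{align*}
\log f_X(\xx)=\scalar{\Lambda,\xx}+\tilde e(\xx)+C_1,\qquad\log f_Y(\yy)=\scalar{\Lambda,\yy}+\tilde f(\yy)+C_2.
\end{align*}
Setting $e=\exp\tilde e$, $f=\exp\tilde f$ (both $\ww$-multiplicative by construction) and $C_X=e^{C_1}$, $C_Y=e^{C_2}$ produces \eqref{fxy}. For the particular cases: when $\ww=\ww_1$, Theorem \ref{w1th} combined with continuity of $\tilde e,\tilde f$ forces $\tilde e(\xx)=\kappa_1\log\det\xx$ and $\tilde f(\xx)=\kappa_2\log\det\xx$; integrability of the densities on $\VV$ then requires $-\Lambda=\ab\in\VV$ and $\kappa_i=p_i-\dim\VV/r$ with $p_i>\dim\VV/r-1$, yielding $X\sim\gamma_{p_1,\ab}$ and $Y\sim\gamma_{p_2,\ab}$. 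When $\ww=\ww_2$, Remark \ref{w2rem} gives $\tilde e=\log\Delta_{\sigma_1}$ and $\tilde f=\log\Delta_{\sigma_2}$ for some $\sigma_i\in\RR^r$; with the reparametrization $s_i=\sigma_i+(\dim\VV/r)(1,\ldots,1)$, integrability then forces $-\Lambda=\ab\in\VV$ and $s_{i,j}>(j-1)d/2$, producing the Riesz laws.

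The real substance, already dispatched in Theorem \ref{T1}, is the Olkin--Baker classification; the remainder is essentially a change of variables and specialization. The one subtle point is establishing continuity and strict positivity of the marginals $f_U,f_V$ so that Theorem \ref{T1} is applicable in its stated continuous form, but this is immediate from the explicit expression above for the joint density together with the strict positivity hypothesis on $f_X,f_Y$. The verification that $-\Lambda$ lies in the cone in the two special cases is a standard Laplace-transform integrability argument for generalized power functions on $\VV$.
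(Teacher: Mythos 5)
Your proposal is correct and follows essentially the same route as the paper: derive the joint density of $(U,V)$ by the change of variables from Theorem \ref{impl}, use independence and continuity to get the Olkin--Baker equation \eqref{lukacs} everywhere, apply Theorem \ref{T1}, and then specialize via Theorem \ref{w1th} and Remark \ref{w2rem}. Your explicit remark on the continuity and strict positivity of the marginals $f_U$, $f_V$ is a welcome (if minor) tightening of a step the paper leaves implicit.
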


\begin{proof} 
We start from \eqref{kukuku}. Since $(U,V)$ is assumed to have independent components, the following identity holds almost everywhere with respect to Lebesgue measure: 
\begin{align}\label{kukuku2}
(\det(\xx+\yy))^{\dim\VV/r}f_X(\xx)f_Y(\yy)=f_U\left(\gw\left(\xx+\yy\right) \xx\right)f_V(\xx+\yy),
\end{align}
where $f_X$,$f_Y$,$f_U$ and $f_V$ denote densities of $X$, $Y$, $U$ and $V$, respectively.

Since the respective densities are assumed to be continuous, the above equation holds for every $\xx,\yy\in\VV$. Taking logarithms of both sides of the above equation (it is permitted since $f_X, f_Y>0$ on $\VV$) we get 
\begin{align}\label{lukacs}
a(\xx)+b(\yy)=c(\xx+\yy)+d\left(\gw\left(\xx+\yy\right)\xx\right),
\end{align}
where
\begin{align*}
a(\xx)&=\log\, f_X(\xx),\\
b(\xx)&=\log\, f_Y(\xx),\\
c(\xx)&=\log\, f_V(\xx)-\tfrac{\dim\VV}{r}\log\det(\xx),\\
d(\ub)&=\log\, f_U(\ub),
\end{align*}
for $\xx\in\VV$ and $\ub\in\DD$.

The first part of the conclusion follows now directly from Theorem \ref{T1}. Thus there exist constants $\Lambda\in\En$, $C_i\in\RR$, $i\in\{1,2\}$ and $\ww$-logarithmic functions $e$ and $f$ such that 
 \begin{align*}
f_X(\xx) & =e^{a(\xx)}=e^{C_1}e(\xx)e^{\scalar{\Lambda,\xx}},\\
f_Y(\xx) & =e^{b(\xx)}=e^{C_2}f(\xx)e^{\scalar{\Lambda,\xx}},
\end{align*}
for any $\xx\in\VV$.

Let us observe that if $\ww(\xx)=\ww_1(\xx)=\PP(\xx^{1/2})$, then for Theorem \ref{w1th} there exist constants $\kappa_i$, $i=1,2,$ such that $e(\xx)=(\det\xx)^{\kappa_1}$ and $f(\xx)=(\det\xx)^{\kappa_2}$. 
Since $f_X$ and $f_Y$ are densities it follows that $\ab=-\Lambda\in\VV$, $k_i=p_i-(\dim\VV)/r>-1$ and $e^{C_i}=(\det(\ab))^{p_i}/\Gamma_\VV(p_i)$, $i=1,2$.

Analogously, if $\ww(\xx)=\ww_2(\xx)=t_\xx$ then Theorem \ref{w2th} and Remark \ref{w2rem} imply that there exist constants $s_i=(s_{i,j})_{j=1}^r$, $s_{i,j}>(j-1)d/2$, $i=1,2$, and $\ab=-\Lambda\in\VV$ such that $X\sim R_{s_1,\ab}$ i $Y\sim R_{s_2,\ab}$.	
\end{proof}


\begin{remark}\label{remtriv}
Fix $k\in K$ and consider $w^{(k)}(\xx)=w(\xx) k$. The $w^{(k)}$-multiplicative function $f$ satisfies equation
\begin{align*}
f(\xx)f(w(\ee)k\yy)=f(w(\xx) k \yy).
\end{align*}
Substituting $\yy\mapsto k^{-1} \yy\in\VV$ we obtain
\begin{align*}
f(\xx)f(w(\ee)\yy)=f(w(\xx)\yy),
\end{align*}
that is $w^{(k)}$-multiplicative functions are the same as $w$-multiplicative functions. This leads to the rather unsurprising observation that if we consider Theorem \ref{lukth} with $w(\xx)=\PP(\xx^{1/2})k$ or $w(\xx)=t_\xx k$, regardless of $k\in K$, we will characterize the same distributions as in points $(1)$ and $(2)$ of Theorem \ref{lukth}.
\end{remark}

With Theorem \ref{lukth} one can easily re-prove original Lukacs-Olkin-Rubin theorem (version of \cite{OlRu1964} and \cite{CaLe1996}), when the distribution of $U$ is invariant under a group of automorphisms:
\begin{remark}
Let us additionally assume in Theorem \ref{lukth}, that the quotient $U$ has distribution which is invariant under a group of automorphisms, that is $kU\stackrel{d}{=}U$ for any $k\in K$. From the proof of Theorem \ref{impl} it follows that there exist continuous $\ww$-multiplicative functions $e$ and $f$ and constant $C$ such that for $\ub\in\DD$,
$$ f_U(\ub)=C e(\ww(\ee)\ub) f(\ee-\ww(\ee)\ub).$$
The distribution of $U$ is invariant under $K$, thus density $f_U$ is a $K$-invariant function, that is $f_U(\ub)=f_U(k\ub)$ for any $k\in K$. Note that $w(\ee)\in K$, thus
\begin{align}\label{KKK}
e(\ub)f(\ee-\ub)=e(k\ub)f(\ee-k\ub),\quad (k,\ub)\in K\times\DD.
\end{align}
We will show that both functions $e$ and $f$ are $K$-invariant. Recall that $e(\xx)\,\,e(\ww(\ee)\yy)=e(\ww(\xx)\yy)$, therefore after taking $\yy=\alpha\ee$ we obtain
$e(\alpha\xx)=e(\xx)e(\alpha\ee)$ for any $\alpha>0$ and $\xx\in\VV$. Inserting $\ub=\alpha\vb$ into \eqref{KKK} we arrive at
\begin{align*}
e(\vb) e(\alpha\ee) f(\ee-\alpha\vb) 
=e(\alpha\vb) f(\ee-\alpha\vb)=e(\alpha k\vb) f(\ee-\alpha k\vb) 
= e(k\vb) e(\alpha\ee) f(\ee-k\alpha\vb).
\end{align*}
Thus $e(\vb) f(\ee-\alpha\vb)=e(k\vb) f(\ee-k\alpha\vb)$ for any $\alpha\in(0,1]$, $\vb\in\DD$. Since $f(\ee)=1$ and $f$ is continuous on $\VV$, by passing to the limit as $\alpha\to0$ we get that $e$ is $K$-invariant and so is $f$. 
By Theorem \ref{XXX} and continuity of $e$ and $f$ we get that there exist constants $\kappa_1$, $\kappa_2$ such that $e(\xx)=(\det\xx)^{\kappa_1}$ and $f(\xx)=(\det\xx)^{\kappa_2}$, hence $X$ and $Y$ have Wishart distributions.
\end{remark}

\subsection*{Acknowledgement} The author thanks J. Weso{\l}owski for helpful comments and discussions. This research was partially supported by NCN grant No. 2012/05/B/ST1/00554.

\bibliographystyle{plainnat}

\bibliography{Bibl}

\end{document}